\documentclass[11pt]{article}
\date{}

\usepackage[title]{appendix}
\usepackage{xcolor}
\usepackage[margin=1in]{geometry}                
\usepackage{graphicx}
\usepackage{subcaption}
\usepackage{pdflscape}
\usepackage{amssymb}
\usepackage[normalem]{ulem}
\usepackage{hyperref}
\usepackage{enumitem}
\usepackage{mathrsfs}
\usepackage{epstopdf}
\usepackage{rotating}
\usepackage{longtable} 
\usepackage{adjustbox}
\usepackage{float}

\usepackage{color}
\usepackage{bbm, dsfont}
\usepackage{pst-node}
\usepackage{tikz-cd}
\usepackage{amsfonts} 
\usepackage{geometry}
\usepackage{amsthm}
\usepackage{amsmath}
\usepackage{titlesec}
\usepackage{amssymb}
\usepackage{enumitem}
\usepackage{float}
\usepackage [english]{babel}
\usepackage [autostyle, english = american]{csquotes}
\usepackage{algorithm}
\usepackage[noend]{algpseudocode} 
\makeatletter
\def\BState{\State\hskip-\ALG@thistlm}
\makeatother
\usepackage{hyperref}
\usepackage[normalem]{ulem}
\usepackage{mathrsfs}
\usepackage[italicdiff]{physics}

\newlist{casess}{enumerate}{1}
\setlist[casess]{label=     \textbf{Case} \arabic*:}
\usepackage{mathtools}

\makeatletter
\newcommand*{\rom}[1]{\expandafter\@slowromancap\romannumeral #1@}
\makeatother

\usepackage{etoolbox}

\makeatletter
\patchcmd{\ttlh@hang}{\parindent\z@}{\parindent\z@\leavevmode}{}{}
\patchcmd{\ttlh@hang}{\noindent}{}{}{}
\makeatother

\usepackage{listings}
\usepackage{color} 
\definecolor{mygreen}{RGB}{28,172,0} 
\definecolor{mylilas}{RGB}{170,55,241}

\newlist{Assumptions}{enumerate}{1}
\setlist[Assumptions]{label=     \textbf{Assumption} \arabic*:}

\makeatletter

\newsavebox{\@brx}
\newcommand{\llangle}[1][]{\savebox{\@brx}{\(\m@th{#1\langle}\)}%
  \mathopen{\copy\@brx\kern-0.5\wd\@brx\usebox{\@brx}}}
\newcommand{\rrangle}[1][]{\savebox{\@brx}{\(\m@th{#1\rangle}\)}%
  \mathclose{\copy\@brx\kern-0.5\wd\@brx\usebox{\@brx}}}
\makeatother

\usepackage{lipsum} 
\usepackage{titlesec}
\titleformat{\subsection}[runin]
       {\normalfont\bfseries}
       {\thesubsection}
       {0.5em}
       {}
       [.]

 \newtheorem{thm}{Theorem}[section]
 \newtheorem{cor}[thm]{Corollary}
 
 \newtheorem{lem}[thm]{Lemma}
 \newtheorem{prop}[thm]{Proposition}
 \theoremstyle{definition}
 \newtheorem{defn}[thm]{Definition}
 \theoremstyle{remark}
 \newtheorem{rem}[thm]{Remark}
 
 \newtheorem{ex}[thm]{Example}
 
 \numberwithin{equation}{section}

\numberwithin{equation}{section}

\newcommand{\Q}{\mathbb{Q}}

\def\N{\mathbb{N}}

\def\Z{\mathbb{Z}}

\def\Z{\mathbb Z}

\DeclarePairedDelimiterX{\inp}[2]{\langle}{\rangle}{#1, #2}

\makeatletter
\newcommand*\bigcdot{\mathpalette\bigcdot@{.5}}
\newcommand*\bigcdot@[2]{\mathbin{\vcenter{\hbox{\scalebox{#2}{$\m@th#1\bullet$}}}}}
\makeatother

\def\<{\langle}
\def\>{\rangle}

\newcommand{\Cs}{\ensuremath{\mathrm{C}^\ast}}
\newcommand{\Csr}{\mathrm{C}^\ast_{\rm r}}

\numberwithin{equation}{section}

\usepackage[backend=biber,maxnames=10]{biblatex}
\begin{document}

\title{Selfless inclusions arising from commensurator groups of hyperbolic groups}

\author{Aaratrick Basu \& Felipe Flores
\footnote{
\textbf{2020 Mathematics Subject Classification:} Primary 22D25, Secondary 20F67, 46L05, 37B05
\newline
\textbf{Key Words:} Commensurator group, hyperbolic group, selfless \Cs-algebra, property $\mathrm{P}_{\mathrm{PHP}}$, Gromov boundary, extreme boundary.}
}

\maketitle

\begin{abstract}\setlength{\parindent}{0pt}\setlength{\parskip}{1ex}\noindent
We provide new examples of \Cs-selfless groups and inclusions. In particular, we prove that the commensurator group ${\rm Comm}(H)$ of a torsion-free hyperbolic group $H$ is \Cs-selfless. Our approach involves showing that the Gromov boundary $\partial H$ is a topologically free extreme boundary for ${\rm Comm}(H)$, ${\rm Aut}(H)$, and for other groups that contain $H$ in an almost normal way.
\end{abstract}

\section{Introduction}

Robert recently introduced the concept of \emph{selfless} \Cs-algebras \cite{Ro25}, provoking a small revolution in the field of \Cs-algebras. Since then, the study of selfless \Cs-algebras has provided many new examples of non-nuclear \Cs-algebras with very desirable structural properties, most notably strict comparison of positive elements (or strict comparison, for short), and topological stable rank one. 

In \cite{AGKEP25}, Amrutam-Gao-Kunnawalkam Elayavalli-Patchell proved that the reduced \Cs-algebras of finitely generated free groups are selfless and hence have strict comparison, solving a long-standing problem in operator algebras. The study of selflessness has also inspired applications to continuous model theory \cite{KES25}, von Neumann algebras \cite{HoMa26}, and the study of approximate representations of groups \cite{GKEMP26}.

The class of \Cs-algebras known to be selfless currently contains many interesting subclasses, namely some reduced group \Cs-algebras \cite{AGKEP25,Oz25,Vi25,Vi26} —which include the \Cs-algebras associated with acylindrically hyperbolic groups with trivial finite radical—, twisted reduced group \Cs-algebras \cite{RTV25,FKOCP26}, and all reduced free products, amalgamated free products, and graph products of \Cs-algebras that satisfy an appropriate version of the Avitzour conditions \cite{HKER25,FKOCP25,GKEPL26}.

The present work has the goal of producing new examples of \Cs-selfless groups and \Cs-selfless inclusions; that is, groups whose reduced \Cs-algebras are selfless or inclusions of groups that produce selfless inclusions in the sense of \cite{HKEPR25}. In order to do so, we rely on the fundamental observation of Ozawa, later extended to inclusions \cite{FKOCP26}, that groups admitting topologically free extreme boundaries are \Cs-selfless \cite{Oz25}. The main theorem of the present article provides new examples of topologically free extreme boundaries and, hence, new examples of \Cs-selfless groups.

Recall that the famous breakthrough of Breuillard-Kalantar-Kennedy-Ozawa \cite{BKKO17} characterized \Cs-simple groups, that is, groups whose reduced \Cs-algebra is simple, as the groups admitting a topologically free boundary action. These groups can also be characterized using the Furstenberg boundary, which is the universal boundary action associated to any group. \Cs-selfless groups are necessarily \Cs-simple, but the literature does not currently contain an example of a \Cs-simple group that is not \Cs-selfless, and some experts conjecture that the two classes are equal. If that is the case, then \Cs-selfless group inclusions should correspond to \Cs-irreducible inclusions. Note that the study of irreducible inclusions, initiated by R\o rdam \cite{Ro23}, is currently an active direction of research, especially for group algebras \cite{Ur22,BeOm23,LiSc23}.

Our main theorem confirms this conjecture in the case of \emph{commensurated inclusions} of hyperbolic groups by producing extreme boundary actions for these groups. Our idea is to leverage the existence of the Gromov boundary, which is, loosely speaking, a fairly universal extreme boundary naturally associated with every hyperbolic group. In fact, we follow the reasoning of Li-Scarparo \cite{LiSc23} -who worked with the Furstenberg boundary- to show that these groups act on the Gromov boundary of the almost normal subgroup and then provide a criterion for the action to be topologically free. 

Let us now state our main theorem. In it, we use the following notation: $C_G(S)$ is the centralizer of the subset $S$ in the group $G$; $\partial H$ denotes the Gromov boundary of the hyperbolic group $H$, and $H\leq_c G$ means that $H\leq G$ and $[H:H\cap gHg^{-1}]<\infty$ for all $g\in G$.

\begin{thm}\label{main}
    Let $G,H$ be countable groups, with $H$ hyperbolic and $H\leq_c G$. Suppose that $H\curvearrowright\partial H$ is topologically free and that $s\not\in C_G( H \cap s^{-1}Hs)$ for all $s\in G\setminus\{1\}$. Then $G\curvearrowright\partial H$ in a way that is topologically free and extends the action $H\curvearrowright\partial H$.
\end{thm}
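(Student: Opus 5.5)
The plan is to build the action of $G$ on $\partial H$ from quasi-isometries. Then I will show topological freeness by producing, for each nontrivial $s$, a nonempty open set on which $s$ moves every point.

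\textbf{Step 1: constructing the action.} Fix $s\in G$ and put $H_s=H\cap s^{-1}Hs$, which has finite index in $H$. Conjugation $c_s\colon H_s\to sH_ss^{-1}=H\cap sHs^{-1}$, $h\mapsto shs^{-1}$, is an isomorphism between finite index subgroups of $H$. The inclusions $H_s\hookrightarrow H$ and $sH_ss^{-1}\hookrightarrow H$ are quasi-isometries, so $\partial H_s=\partial H=\partial(sH_ss^{-1})$ canonically. Hence $c_s$ induces a homeomorphism $\phi_s$ of $\partial H$, given by $\phi_s(\lim h_n)=\lim s h_n s^{-1}$ for sequences $h_n\in H_s$ converging to a boundary point. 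I then need to check three things.
\begin{enumerate}
\item $\phi_s$ does not depend on the finite index subgroup used: passing to a further finite index subgroup does not change boundary limits.
\item $\phi_{st}=\phi_s\phi_t$. Both maps agree with the map induced by conjugation by $st$ on the finite index subgroup $H\cap t^{-1}H_s t\cap H_t$, so this reduces to the first point.
\item For $h\in H$, the map $\phi_h$ is the usual action. Indeed $h h_n h^{-1}$ stays at bounded distance from $h h_n$, so both sequences have the same limit.
\end{enumerate}
This follows the Li--Scarparo recipe. Note that $\phi_s$ is equivariant in the sense that $\phi_s(h\xi)=(shs^{-1})\phi_s(\xi)$ for $h\in H_s$.

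\textbf{Step 2: topological freeness.} Suppose $s\neq1$ and $\phi_s$ fixes a nonempty open set $U\subseteq\partial H$ pointwise. I want to show that $s$ centralizes $H_s$, which contradicts the hypothesis. Take any $h\in H_s$ and $\xi\in U$, and shrink $U$ if needed. On $U\cap h^{-1}U$ we have
\[
\phi_s(h\xi)=h\xi \quad\text{and}\quad \phi_s(h\xi)=shs^{-1}\phi_s(\xi)=shs^{-1}\xi .
\]
So $h^{-1}shs^{-1}$ fixes $\xi$ for all $\xi\in U\cap h^{-1}U$. If this set is nonempty and open, topological freeness of $H\curvearrowright\partial H$ gives $h^{-1}shs^{-1}=1$, that is, $s$ commutes with $h$.

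\textbf{Main obstacle.} The difficulty is that $U\cap h^{-1}U$ may be empty. I would handle this as follows.
\begin{enumerate}
\item The set $\{h\in H_s : U\cap h^{-1}U\neq\emptyset\}$ contains an open neighborhood-type subset, and I expect it generates $H_s$. For instance, take a loxodromic $g\in H_s$ whose attracting fixed point lies in $U$; minimality and north--south dynamics give such a $g$ (when $H$ is nonelementary, which topological freeness essentially forces, modulo trivial cases).
\item Alternatively, avoid that issue by using the stronger fact that $\phi_s$ fixes a neighborhood of the attracting point $g^+$ of such a loxodromic $g$. Then $\phi_s$ fixes the whole $\langle g\rangle$-orbit closure of points in $U$, and by north--south dynamics $g^{-n}U$ exhausts $\partial H\setminus\{g^-\}$. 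Combining with the equivariance $\phi_s g^{-n}=(sg s^{-1})^{-n}\phi_s$ would show that $\phi_s$ is the identity on all of $\partial H$.
\end{enumerate}
Once $\phi_s=\mathrm{id}$, equivariance gives $shs^{-1}\xi=h\xi$ for every $h\in H_s$ and every $\xi\in\partial H$. Topological freeness of $H$ then yields $shs^{-1}=h$, so $s\in C_G(H_s)$, contradicting the hypothesis. Settling this propagation from $U$ to all of $\partial H$ carefully is where I expect the real work.
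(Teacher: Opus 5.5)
Your Step 1 is the paper's construction, and your Step 2 computation on $U\cap h^{-1}U$ is exactly the one in the paper's Lemma~\ref{interior}. The gap is the one you flagged. Neither alternative actually gets from ``$s$ commutes with every $h\in H_s$ for which $U\cap h^{-1}U\neq\emptyset$'' to ``$s$ centralizes $H_s$''.

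In your second alternative, you claim that $\phi_s$ fixes the $\langle g\rangle$-orbit of $U$. This does not follow from the equivariance $\phi_s g^{-n}=(sgs^{-1})^{-n}\phi_s$ unless $sgs^{-1}=g$, and you never establish that. Your own computation supplies it:
\begin{itemize}
\item Since $g^{+}\in U$ and $g^{-1}g^{+}=g^{+}$, the set $U\cap g^{-1}U$ is nonempty and open.
\item It is fixed pointwise by $g^{-1}sgs^{-1}\in H$, which is therefore trivial by topological freeness of $H\curvearrowright\partial H$.
\item Hence $\phi_s(g^{-n}\xi)=g^{-n}\xi$ for $\xi\in U$.
\item North--south dynamics gives $\bigcup_n g^{-n}U\supseteq\partial H\setminus\{g^{-}\}$, which is dense because $\partial H$ has no isolated points.
\item So $\phi_s=\mathrm{id}$, and your final paragraph finishes the argument.
\end{itemize}
A suitable $g$ exists: take an infinite-order $g_0$ in the normal core of $H_s$ in $H$, and replace it by $kg_0k^{-1}$ for some $k\in H$ with $kg_0^{+}\in U$, using minimality.

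Repaired this way, your argument is a genuinely different route from the paper's. The paper takes your first alternative. It shows that $s$ commutes with every $t\in A_U=\{t\in H_s: tU\cap U\neq\emptyset\}$. It then obtains that $A_U$ generates $H_s$ from \cite[Lemma 5.1]{BKKO17}, which applies because a finite-index subgroup of $H$ still acts on $\partial H$ as an extreme boundary action \cite[Proposition 1.3]{FKOCP26}.

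Your ``I expect it generates $H_s$'' is exactly this lemma and cannot be left as an expectation. A short proof: if $L=\langle A_U\rangle\neq H_s$, the open set $W=\bigcup_{l\in L}lU$ satisfies $kW\cap W=\emptyset$ for all $k\in H_s\setminus L$, which is incompatible with extreme proximality.

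Comparing the two: the paper's route needs only extreme proximality of $H_s$. Yours avoids the generation lemma by using a single loxodromic element that $s$ is forced to centralize, and it shows directly that ${\rm Fix}(s)$ is all of $\partial H$ once it has interior.

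Finally, your opening sentence (``a nonempty open set on which $s$ moves every point'') describes faithfulness, not topological freeness. Step 2 does target the right condition, namely that ${\rm Fix}(s)$ has empty interior for $s\neq1$. If topological freeness is read as density of points with trivial stabilizer, you still need the Baire category step over the countable group $G$, as in the paper.
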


It is known that the action $H\curvearrowright\partial H$ is topologically free precisely when $H$ has trivial finite radical, i.e has no normal finite subgroups. In such a case, the theorem above applies directly to the inclusion $H\leq {\rm Aut}(H)$. It seems that, until now, whether the action ${\rm Aut}(H)\curvearrowright\partial H$ is faithful or not was a problem with only partial understanding. Koberda \cite{Ko11} and later Clay-Zabanfahm \cite{ClZa17} approached this question using strong orderability assumptions. An immediate application of our main theorem settles this question in full generality.

\begin{cor}\label{gp-main}
    Let $H$ be a hyperbolic group with trivial finite radical (immediate if $H$ is torsion-free). Then the action ${\rm Aut}(H)\curvearrowright\partial H$ is topologically free.
\end{cor}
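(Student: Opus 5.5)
The plan is to derive Corollary \ref{gp-main} directly from Theorem \ref{main}, applied to the inclusion $H\hookrightarrow {\rm Aut}(H)$ given by $h\mapsto \mathrm{Inn}_h$ (conjugation by $h$). There are three things to check: this map is injective, its image is normal (hence commensurated), and the centralizer hypothesis holds. We also use the fact recalled just before the corollary that $H\curvearrowright\partial H$ is topologically free because $H$ has trivial finite radical. If $H$ is elementary (finite or virtually cyclic), trivial finite radical forces $H$ to be trivial or $\mathbb{Z}$ or $D_\infty$. These cases can be checked by hand: $\partial H$ has at most two points, and ${\rm Aut}$ of these groups acts faithfully on them. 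So we assume $H$ is non-elementary. Countability of ${\rm Aut}(H)$ follows from $H$ being finitely generated.

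\textbf{Step 1 (injectivity).} The kernel of $h\mapsto \mathrm{Inn}_h$ is the center $Z(H)$. For non-elementary hyperbolic $H$, the center is finite: any element commuting with everything fixes the limit set pointwise, and infinite-order elements have only two fixed points in $\partial H$. A finite normal subgroup lies in the finite radical, so $Z(H)=1$.

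\textbf{Step 2 (commensurated).} ${\rm Inn}(H)$ is normal in $G={\rm Aut}(H)$ via $\varphi\,\mathrm{Inn}_h\,\varphi^{-1}=\mathrm{Inn}_{\varphi(h)}$. Hence $H\cap sHs^{-1}=H$ for every $s$, and in particular $H\leq_c G$.

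\textbf{Step 3 (centralizer condition).} Since $H\cap s^{-1}Hs=H$, we must show that no nontrivial $\varphi\in{\rm Aut}(H)$ commutes with all inner automorphisms. Commuting with $\mathrm{Inn}_h$ for all $h$ means $\mathrm{Inn}_{\varphi(h)}=\mathrm{Inn}_h$ for all $h$. By Step 1 this gives $\varphi(h)=h$ for all $h$, so $\varphi=\mathrm{id}$.

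Theorem \ref{main} then yields a topologically free action of ${\rm Aut}(H)$ on $\partial H$ extending the action of $H$. It remains to check that this action is the natural one, i.e.\ the boundary extension $\partial\varphi$ of the quasi-isometry $\varphi$. The natural action extends the action of ${\rm Inn}(H)$ and is equivariant with respect to it. Moreover, the action built in the theorem is presumably characterized by $s\cdot(h\xi)=(shs^{-1})\, s\cdot\xi$ together with continuity, via the density of attracting fixed points of loxodromic elements. So I expect the two actions to agree: both send the attracting fixed point $h^+$ to $\varphi(h)^+$.

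The main obstacle is this identification. If the theorem's construction is abstract, I would argue uniqueness directly. Any homeomorphism $T$ of $\partial H$ satisfying $T h T^{-1}=\varphi(h)$ on $\partial H$ must map $h^+$, the unique attracting fixed point of $h$, to $\varphi(h)^+$. Since such points are dense and $T$ is continuous, $T=\partial\varphi$. Steps 1–3 themselves are routine.
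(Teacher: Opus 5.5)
Your Steps 1--3 are exactly the paper's argument. The paper also applies Theorem \ref{main} to ${\rm Inn}(H)\cong H\leq{\rm Aut}(H)$, using only that $Z(H)$ is trivial and that an automorphism commuting with every inner automorphism is the identity. Your remarks on countability of ${\rm Aut}(H)$ and on why $Z(H)=1$ fill in details the paper leaves implicit.

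For $Z(H)=1$ there is a shorter route than finiteness of torsion subgroups. You already observed that a central element fixes $\partial H$ pointwise, and topological freeness of $H\curvearrowright\partial H$ then forces it to be $1$.

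The identification with the natural action $\varphi\mapsto\partial\varphi$ also needs no density argument. Since $H\cap\varphi H\varphi^{-1}=H$, the maps $j_\varphi, j_{\varphi^{-1}}$ are identities and $\varphi_*=\partial\iota_\varphi$. Under $h\leftrightarrow{\rm Inn}_h$, the conjugation $\iota_\varphi$ is $\varphi$ itself, because $\varphi\,{\rm Inn}_h\,\varphi^{-1}={\rm Inn}_{\varphi(h)}$. Your uniqueness argument is correct, just unnecessary.

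The one genuinely wrong step is your treatment of elementary groups. $D_\infty=\langle a,b\mid a^2=b^2=1\rangle$ has trivial finite radical. However, ${\rm Aut}(D_\infty)$ contains ${\rm Inn}(D_\infty)\cong D_\infty$, an infinite group, so it cannot act faithfully on the two-point space $\partial D_\infty$. Concretely, conjugation by $t=ab$ fixes $t^n$ for every $n$, hence fixes both ends $t^{\pm\infty}$. On a two-point space topological freeness means freeness, so the corollary is actually false for $D_\infty$.

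For the same reason, for $H=\Z$ or $D_\infty$ the action $H\curvearrowright\partial H$ itself is not topologically free. So the ``trivial finite radical $\Leftrightarrow$ topologically free'' fact you invoke holds only for non-elementary $H$. The statement is correct only under the paper's standing convention that hyperbolic groups are non-elementary. You should replace your elementary-case paragraph with an appeal to that convention.
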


We now discuss the main application of Theorem \ref{main}. As we said before, Ozawa proved that groups admitting topologically free extreme boundaries are \Cs-selfless \cite{Oz25}. He did so by showing that they satisfy an intermediate property called property $\mathrm{P}_{\mathrm{PHP}}$. In \cite{FKOCP26}, that idea was extended to both group inclusions and twisted group \Cs-algebras. The following is an immediate application of the main theorem together with \cite[Theorem A]{FKOCP26}. The precise meaning of sefless inclusions can be found in Definition \ref{definitionsefl}. An inclusion of groups is \Cs-selfless if their reduced group \Cs-algebras form a selfless inclusion.

\begin{thm}\label{self-main}
    Let $G,H$ be countable groups, with $H$ hyperbolic and $H\leq_c G$. Suppose that $H\curvearrowright\partial H$ is topologically free and that $s\not\in C_G( H \cap s^{-1} H s)$ for all $s\in G\setminus\{1\}$. Then the inclusion $H\leq G$ is \Cs-selfless. Furthermore, for any $2$-cocycle $\omega\in Z^2(G,\mathbb T)$, the inclusion of reduced twisted group \Cs-algebras $\Csr(H,\omega|_H) \subseteq \Csr(G,\omega)$ is selfless.

    In particular, every intermediate \Cs-algebra in the inclusion $\Csr(H,\omega|_H) \subseteq \Csr(G,\omega)$ is simple, pure, has stable rank one, and a unique trace that is also the unique quasitracial state.
\end{thm}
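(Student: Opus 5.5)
The plan is to deduce Theorem \ref{self-main} directly from Theorem \ref{main} combined with \cite[Theorem A]{FKOCP26}. The latter, building on Ozawa's observation \cite{Oz25}, asserts the following. Let $H\leq G$ be an inclusion of countable groups, and suppose $G$ admits a topologically free extreme boundary $X$ on which $H$ still acts as an extreme boundary in a topologically free way. Then the inclusion satisfies the relative property $\mathrm{P}_{\mathrm{PHP}}$, and hence $\Csr(H,\omega|_H)\subseteq \Csr(G,\omega)$ is a selfless inclusion for every $\omega\in Z^2(G,\mathbb T)$.

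So the first step is to apply Theorem \ref{main}. It produces an action $G\curvearrowright\partial H$ that extends $H\curvearrowright\partial H$ and is topologically free. The second step is to verify the remaining hypotheses on $X=\partial H$.
\begin{itemize}
\item $H$ is non-elementary. Topological freeness of $H\curvearrowright\partial H$ together with $\partial H\neq\emptyset$ excludes finite groups. Virtually cyclic groups are also excluded, because an infinite-order element fixes both points of a two-point boundary. So $H$ is non-elementary, $\partial H$ is infinite, and it is an extreme boundary for $H$: it is compact, every orbit is dense, and the convergence (north--south) dynamics of loxodromic elements compresses any proper closed subset into an arbitrarily small neighbourhood of a point.
\item The action of $G$ is extreme. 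Being extreme for $H$ passes to any overgroup acting by homeomorphisms that extend the $H$-action, since the required group elements can already be found in $H$.
\end{itemize}
Hence $\partial H$ is a topologically free extreme boundary for $G$, restricting to one for $H$, and \cite[Theorem A]{FKOCP26} yields that the twisted inclusion is selfless. Taking $\omega=1$ gives that $H\leq G$ is \Cs-selfless.

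The final paragraph of the statement then follows from the general structural consequences of selfless inclusions established in \cite{HKEPR25}, as recalled around Definition \ref{definitionsefl}. If $A\subseteq B$ is a selfless inclusion, every intermediate \Cs-algebra $A\subseteq D\subseteq B$ is itself selfless. Selfless \Cs-algebras are simple, have strict comparison (pure), have stable rank one, and have a unique trace that is also the unique quasitrace \cite{Ro25}.

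The main obstacle is matching the hypotheses of \cite[Theorem A]{FKOCP26} exactly. In particular, one must check whether it requires extremality for $H$ or only for $G$, and whether it requires topological freeness of both actions; this is where the hypothesis on $H\curvearrowright\partial H$ and the extension property from Theorem \ref{main} enter. One should also confirm that the minimality and extremality of $\partial H$ for non-elementary hyperbolic $H$ are standard, which can be cited from the literature on convergence groups.
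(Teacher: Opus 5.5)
Your proposal is correct and follows the paper's proof. Theorem \ref{main} supplies the topologically free extension $G\curvearrowright\partial H$, Lemma \ref{north-south} (minimality plus north--south dynamics) makes $H\curvearrowright\partial H$ an extreme boundary action, and Theorem \ref{finind} (i.e.\ \cite[Theorem A]{FKOCP26}) then gives selflessness of the twisted inclusion. Your extra checks (extremality of the $G$-action and non-elementarity of $H$) are harmless but unnecessary, since Theorem \ref{finind} only requires the restriction to $H$ to be an extreme boundary action and the paper assumes throughout that hyperbolic groups are non-elementary.
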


The above theorem provides a clear set of conditions that one can check in concrete inclusions. The next corollary compiles some examples.

\begin{cor}\label{main-cor}
    The following inclusions satisfy the hypotheses of Theorem \ref{self-main}. In particular, they are \Cs-inclusions.
    \begin{enumerate}
        \item $H\leq {\rm Aut}(H)$, for any hyperbolic group $H$,
        \item $H\leq {\rm Comm}(H)$, for any hyperbolic group $H$,
        \item ${\rm PSL}(2,\Z)\leq{\rm PGL}(2,\Q)$,
        \item $\mathbb F_n\leq\mathbb F_n\rtimes_\phi \Z$, when $n>1$ and $\phi$ is an infinite order automorphism.
    \end{enumerate}
\end{cor}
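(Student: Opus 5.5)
We prove Corollary \ref{main-cor} by checking, for each inclusion, the three hypotheses of Theorem \ref{self-main}:
\begin{itemize}
\item[(a)] $H$ is hyperbolic with $H\curvearrowright\partial H$ topologically free, i.e. $H$ has trivial finite radical;
\item[(b)] $H\leq_c G$;
\item[(c)] for every $s\in G\setminus\{1\}$ there is some $h\in H\cap s^{-1}Hs$ with $sh\neq hs$.
\end{itemize}
As in Corollary \ref{gp-main}, items 1 and 2 are meant for hyperbolic $H$ with trivial finite radical. For non-elementary $H$ this also gives trivial center, so $H$ embeds in ${\rm Aut}(H)$ via inner automorphisms and in ${\rm Comm}(H)$ via the induced classes. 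Condition (a) is then part of the standing assumption, and it is immediate for ${\rm PSL}(2,\Z)\cong \Z/2*\Z/3$ and for $\mathbb F_n$ with $n>1$.

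Commensuration (b) is routine in each case.
\begin{itemize}
\item In items 1 and 4, $H$ is normal in $G$, so $H\cap gHg^{-1}=H$.
\item In item 2, each $g\in{\rm Comm}(H)$ is represented by an isomorphism $\phi:A\to B$ between finite index subgroups. Then $g\,{\rm inn}(a)\,g^{-1}={\rm inn}(\phi(a))$ for $a\in A$, so $H\cap gHg^{-1}\supseteq \phi(A)=B$, which has finite index.
\item In item 3 we use the classical fact that ${\rm PGL}(2,\Q)$ commensurates ${\rm PSL}(2,\Z)$. Conjugating by a rational matrix preserves a congruence subgroup $\Gamma(N)$ up to finite index, for $N$ clearing denominators.
\end{itemize}

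The substantive step is (c). It is a uniform argument: if $s$ commutes with all of a finite index subgroup $K=H\cap s^{-1}Hs$ of $H$, we aim to show $s=1$.
\begin{itemize}
\item \textbf{Items 1 and 2.} $s$ acts on $H$ as a (partial) automorphism, and commuting with ${\rm inn}(k)$ means ${\rm inn}(s(k))={\rm inn}(k)$, i.e. $s(k)k^{-1}\in Z(H)=1$. So $s$ fixes $K$ pointwise.

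For ${\rm Aut}(H)$, take $h\in H$; some power $h^m$ lies in $K$. Then $s(h)^m=h^m$, and uniqueness of roots for infinite-order elements in torsion-free hyperbolic groups gives $s(h)=h$. Since the general case may contain torsion, we instead use an argument that works with torsion. For any $h$, the subgroup $K\cap hKh^{-1}$ has finite index, and $s(h)h^{-1}$ centralizes it. A finite index subgroup of a non-elementary hyperbolic group has centralizer contained in the finite radical, which is trivial. Hence $s(h)=h$.

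The same computation shows an element of ${\rm Comm}(H)$ agreeing with the identity on a finite index subgroup is trivial.
\item \textbf{Item 3.} Centralizing a finite index subgroup of ${\rm PSL}(2,\Z)$, which is Zariski dense, forces $s$ to be central in ${\rm PGL}(2,\Q)$, hence trivial.
\item \textbf{Item 4.} Write $s=w t^k$. If $k\neq 0$, then $s$ commuting with all of $\mathbb F_n$ means $\phi^k$ agrees with conjugation by $w^{-1}$. Then $\phi^k$ is inner, and we need to rule this out.
\end{itemize}

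The main obstacle is item 4 when $\phi$ has infinite order in ${\rm Aut}$ but finite order in ${\rm Out}$, i.e. $\phi^k={\rm inn}(u)$. There $t^k u^{-1}$ is central and condition (c) genuinely fails. So we plan to interpret "infinite order" as infinite order in ${\rm Out}(\mathbb F_n)$. Under that reading, $\phi^k$ inner forces $k=0$. Then $s=w\in\mathbb F_n$ is central in $\mathbb F_n$, so $w=1$.
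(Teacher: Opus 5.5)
Your proposal is correct, and it is more careful than the statement itself. Items 1--2 do need $H$ to have trivial finite radical; otherwise $H\curvearrowright\partial H$ is not topologically free and $H$ is not even \Cs-simple. Your analysis of item 4 is also right (see below).

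\textbf{Item 1.} You argue as the paper does in the proof of Corollary \ref{gp-main}. Since $H\trianglelefteq{\rm Aut}(H)$, you have $K=H$, so ``$s$ fixes $K$ pointwise'' already gives $s={\rm id}$. The root extraction and the $s(h)h^{-1}$ argument are unnecessary.

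\textbf{Items 2 and 3.} Here the routes differ. The paper never checks condition (4) by hand. It imports from Li--Scarparo that $H\leq{\rm Comm}(H)$ is commensurated and \Cs-irreducible when $H$ is \Cs-simple (Proposition \ref{known}), and that ${\rm PSL}(2,\Z)\leq{\rm PGL}(2,\Q)$ is commensurated and \Cs-irreducible \cite[Corollary 3.7]{LiSc23}. It then converts irreducibility into condition (4) via Theorem \ref{char}, i.e.\ \cite[Theorem 3.5]{LiSc23}.

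You instead verify condition (4) directly, using two facts:
\begin{itemize}
\item a finite-index subgroup of a non-elementary hyperbolic group has centralizer inside the finite radical (pass to a normal core; an infinite-order element of the centralizer would have virtually cyclic centralizer of finite index, and torsion normal subgroups of hyperbolic groups are finite);
\item finite-index subgroups of ${\rm PSL}(2,\Z)$ are Zariski dense.
\end{itemize}
Your route is self-contained and elementary. The paper's is shorter on the page but outsources all the group theory to the irreducibility literature.

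One step in item 2 needs tightening. Equality in ${\rm Comm}(H)$ holds only up to finite index, so $[{\rm inn}(s(k))]=[{\rm inn}(k)]$ means $k^{-1}s(k)$ centralizes a finite-index subgroup, not that it lies in $Z(H)$. What you need is your centralizer lemma, i.e.\ triviality of the virtual center ($H$ i.c.c.). That same lemma, not $Z(H)=1$, is also what makes $H\to{\rm Comm}(H)$ injective. Once $s$ is represented by a map that is the identity on a finite-index subgroup, $s=1$ by the definition of ${\rm Comm}(H)$, so no extension argument is needed.

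\textbf{Item 4.} You have found a genuine error. The paper justifies this item through its split-extension example, asserting that for $G=H\rtimes_\phi K$ condition (4) is equivalent to injectivity of $\phi\colon K\to{\rm Aut}(H)$. Your computation $hk\in C_G(H)\iff\phi_k={\rm inn}(h^{-1})$ shows that, for centerless $H$, the correct criterion is injectivity of $K\to{\rm Out}(H)$.

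Concretely, take $\phi={\rm inn}(a)$ with $a\neq 1$. This $\phi$ has infinite order in ${\rm Aut}(\mathbb F_n)$, yet $a^{-1}t$ is central in the semidirect product. Hence $\mathbb F_n\rtimes_\phi\Z\cong\mathbb F_n\times\Z$, which is not \Cs-simple. So under the literal reading the conclusion of item 4 fails, not just the hypothesis. Your reinterpretation, infinite order in ${\rm Out}(\mathbb F_n)$, is the right fix.
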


This article contains three more sections. Section \ref{prelims} is devoted to discussing preliminaries on hyperbolic groups, extreme boundary actions, and \Cs-algebras. Section \ref{mainsec} contains our main results and their proofs. Besides the results that compose Theorem \ref{main} and its proof, we prove Theorem \ref{char}, which characterizes \Cs-selflessness as \Cs-irreducibility of the inclusion. Then we discuss some examples. Finally, Section \ref{comm} contains an explicit description of the abstract commensurator group, as it is our main example.

\section{Preliminaries}\label{prelims}

\subsection{Hyperbolic groups}

We summarize here some generalities on hyperbolic groups and their boundaries. For more exposition, see, for example, \cite[Section V.D]{dlH00} and the references therein.
\begin{defn}
   A finitely generated group \( H \) is \textit{(Gromov) hyperbolic}, if it admits a finite generating set \( S \) such that the word metric with respect to \( S \) makes \( H \) a (Gromov) hyperbolic metric space. Equivalently, \( H \) acts on some hyperbolic metric space properly and co-boundedly. We call \( H \) \textit{non-elementary} if it is infinite and not virtually cyclic. Unless stated otherwise, all hyperbolic groups in this paper are assumed to be non-elementary.
   \smallskip{}

   The \textit{Gromov boundary} \( \partial H \) of a hyperbolic group \( H \) is the ideal boundary of the hyperbolic metric space \( H \), i.e, the set of equivalence classes of geodesic rays under the identification of rays that stay a bounded (Hausdorff) distance apart.
\end{defn}

It is well-known that, for any hyperbolic group $H$, the boundary $\partial H$ can be topologized to make it a compact metrizable Hausdorff space, and so that \( H \cup \partial H \) is a compactification of the metric space \( H \) (cf \cite[Section H.3]{BrHa99}). Moreover, any quasi-isometry \( \varphi : K \to H \) induces a homeomorphism \( \partial \varphi : \partial K \to \partial H \) \cite[Theorem H.3.9]{BrHa99}. In particular, the action of the \( H \) on itself by conjugation induces an action on \( \partial H \). In most of the literature, it is the left multiplication action of \( H \) that is used to induce the action on \( \partial H \), but note, however, that the conjugation action induces the same maps on the boundary. This is a crucial observation for our purposes, since we will need to relate the action of \( H \) to that of certain supergroups, and the action by left multiplication is not induced by homomorphisms at the group level.

The study of many properties of hyperbolic groups is intimately connected to properties of their boundaries. Indeed, \cite{Bo98} gives a characterization of hyperbolicity in terms of the action of the group on triples of points on the boundary. We shall need the following fundamental dynamical property of the action on the boundary (cf \cite{KaBe02}).

\begin{thm}[North-south dynamics]\label{NS}
   Let \( H \) be a hyperbolic group with boundary \( \partial H \), and \( g \in H \) be an element of infinite order. Then the action of \( g \) on \( \partial H \) has exactly two fixed points \( g^{\pm\infty} \), which are obtained as the limits of \( (g^n)_{n \geq 0} \) and \( (g^{-n})_{n \geq 0} \) respectively. Moreover, for any open \( U,V \subset \partial H \) such that \( g^\infty \in U, g^{-\infty} \in V \), there is \( N \) such that \( g^n (\partial H \setminus V) \subset U \) for all \( n \geq N \).
\end{thm}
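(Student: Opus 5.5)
The plan is the standard argument via Gromov products, working in a Cayley graph $X$ of $H$ with respect to a finite generating set $S$. We take $X$ to be $\delta$-hyperbolic and use the basepoint $1$. The key input is that $g$ acts as a \emph{loxodromic} isometry, i.e.\ the orbit map $\mathbb Z\to X$, $n\mapsto g^n$, is a quasi-isometric embedding. I expect this to be the main obstacle. I would get it from the standard fact that infinite cyclic subgroups of hyperbolic groups are undistorted, equivalently that the stable translation length $\tau(g)=\lim_n d(1,g^n)/n$ is positive for every infinite-order $g$. If I had to prove this, I would proceed in three steps:
\begin{enumerate}
\item Conjugate $g$ to an element $g'$ of minimal word length in its conjugacy class.
\item Use the fact that in a hyperbolic group there are only finitely many conjugacy classes of elements of bounded length that fail to be ``straight''. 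From this, deduce that some power $g'^k$ has the property that concatenating geodesics $[1,g'^k]$ gives a local geodesic with long enough pieces.
\item Apply the local-to-global principle in $\delta$-hyperbolic spaces to conclude that this local geodesic is a global quasi-geodesic.
\end{enumerate}

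Given this, the bi-infinite path through the points $g^n$, $n\in\mathbb Z$, is a quasi-geodesic. By Morse stability, its two ends converge to points $g^{\infty}=\lim_{n\to\infty}g^n$ and $g^{-\infty}=\lim_{n\to\infty}g^{-n}$ in $\partial H$. These two points are distinct, since a bi-infinite quasi-geodesic has distinct endpoints. Both are fixed by $g$: by continuity of the action on $H\cup\partial H$, $g\cdot g^{\infty}=\lim_n g^{n+1}=g^{\infty}$, and similarly $g\cdot g^{-\infty}=g^{-\infty}$.

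For the dynamics, let $V\ni g^{-\infty}$ and $U\ni g^{\infty}$ be open. First, since $\partial H\setminus V$ is compact and avoids $g^{-\infty}$, the Gromov products $(\xi\,|\,g^{-\infty})_1$ are bounded by some $C$ for all $\xi\in\partial H\setminus V$. Second, because $g^{-n}\to g^{-\infty}$, the usual $\delta$-inequalities for Gromov products give $(\xi\,|\,g^{-n})_1\le C+2\delta$ for all large $n$, uniformly in $\xi\notin V$. Next I use invariance of Gromov products under the isometry $g^{-n}$, together with the identity $(x|y)_z+(x|z)_y=d(y,z)$, which holds up to an additive constant depending only on $\delta$ when $x$ is a boundary point. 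This gives
\[
(g^n\xi\,|\,g^n)_1=(\xi\,|\,1)_{g^{-n}}\ \ge\ d(1,g^{-n})-(\xi\,|\,g^{-n})_1-c_\delta\ \ge\ d(1,g^n)-C-c'_\delta .
\]
Since $d(1,g^n)\to\infty$ by loxodromicity, $(g^n\xi\,|\,g^n)_1\to\infty$ uniformly in $\xi\notin V$. Also $(g^n\,|\,g^{\infty})_1\to\infty$. Combining these by the $\delta$-inequality, $(g^n\xi\,|\,g^{\infty})_1\to\infty$ uniformly in $\xi\notin V$. Because the sets $\{\eta:(\eta\,|\,g^\infty)_1>R\}$ form a neighbourhood basis of $g^\infty$, this gives $N$ with $g^n(\partial H\setminus V)\subset U$ for all $n\ge N$.

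Finally, there are no other fixed points. Suppose $\xi$ is fixed by $g$ and $\xi\ne g^{\pm\infty}$. Choose disjoint neighbourhoods $V\ni g^{-\infty}$ and $U\ni g^{\infty}$ with $\xi\notin U\cup V$. Then $\xi=g^n\xi\in U$ for large $n$, a contradiction. Hence $g^{\pm\infty}$ are exactly the two fixed points, completing the proof.
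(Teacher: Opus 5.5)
The paper does not prove this theorem. It quotes it as a standard fact about hyperbolic groups with a pointer to \cite{KaBe02}, so there is no internal argument to compare against.

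Your proof is the usual textbook one, and it is correct:
\begin{itemize}
\item loxodromicity plus Morse stability gives the two distinct limit points $g^{\pm\infty}$;
\item the estimate $(g^n\xi\,|\,g^n)_1\ge d(1,g^n)-C-c'_\delta$ gives uniform contraction of $\partial H\setminus V$ into $U$;
\item the dynamics rules out any further fixed points.
\end{itemize}
Working with the left-multiplication action on the Cayley graph is legitimate. As the paper remarks, it induces the same boundary homeomorphisms as conjugation.

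Two small points on the main argument. First, the uniform bound $(\xi\,|\,g^{-\infty})_1\le C$ on $\partial H\setminus V$ comes directly from the sets $\{\eta:(\eta\,|\,g^{-\infty})_1>R\}$ forming a neighbourhood basis of $g^{-\infty}$. Compactness plays no role here, and the boundary Gromov product is in general not continuous, so compactness alone would not give the bound. Second, in the contraction step $d(1,g^n)\to\infty$ already follows from infinite order and finiteness of balls. Loxodromicity is really needed only to produce the two distinct limit points $g^{\pm\infty}$.

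Do not rely on your sketch of the undistortion of $\langle g\rangle$; simply cite it (e.g.\ \cite[Chapter III.$\Gamma$]{BrHa99}). As written, step 2 has two problems.
\begin{itemize}
\item The order is backwards. A power of a minimal-length conjugate $g'$ need not have minimal length in its own conjugacy class. You must first pass to a power $g^m$ and then take a minimal-length conjugate $w$ of it. Concatenated copies of a geodesic word for $w$ then form a $|w|$-local geodesic, since every window of length $|w|$ is a cyclic permutation of $w$. This is a quasi-geodesic once $|w|>8\delta$.
\item More importantly, the existence of such an $m$ does not follow from finiteness of the set of conjugacy classes meeting a ball. Pigeonhole only shows that $g^m$ and $g^{m'}$ are conjugate for some $m\neq m'$. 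That is exactly what happens for $a$ in $BS(1,2)=\langle a,t\mid tat^{-1}=a^2\rangle$, where every $a^{2^k}$ is conjugate to $a$.
\end{itemize}
So a genuinely hyperbolic input is needed at that point. One option is the fact that a non-loxodromic isometry of a proper $\delta$-hyperbolic space has, for each $N$, a point moved at most $C(\delta)$ by each of $g,\dots,g^N$. Translating that point near $1$ and using finiteness of balls then forces $g^i=g^j$ for some $i\neq j$, contradicting infinite order.
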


It is known that the action $H\curvearrowright\partial H$ of the hyperbolic group $H$ is topologically free precisely when it has trivial finite radical. In other words, when $H$ contains no normal finite subgroup.


\subsection{Extreme boundary actions}\label{criteria}

We now introduce the notion of extreme boundary actions. This property was observed to imply \Cs-selflessness by Ozawa \cite[Section 8]{Oz25}. In \cite{FKOCP26}, these actions are used to obtain selfless inclusions in the sense of Hayes-Kunnawalkam Elayavalli-Patchell-Robert \cite{HKEPR25}.

\begin{defn}
Let $G$ be a group, and $X$ a compact Hausdorff space with $\# X>2$. An action $G\curvearrowright X$ is referred to as an \emph{extreme boundary action} if it is minimal and \emph{extremely proximal}. Recall that $G \curvearrowright X$ is extremely proximal if, for every pair of non-empty open subsets $U,V\subseteq X$, there exists $g\in G$ such that $g(X\setminus U)\subseteq V$.
\end{defn}

The following lemma is well-known and marks the starting point of our work. It shows that actions with north-south dynamics are extremely proximal. We include its proof for the convenience of the reader.

In general, by mimicking the situation of hyperbolic groups, we will say that an element $g\in G$ has \emph{north-south dynamics} for the action $G\curvearrowright X$ if it has exactly two fixed points $x,y\in X$ and for every pair of open sets $U,V\subset X$ with $x\in U,y\in V$, there exists $n\in\N$ such that $g^n(X\setminus V)\subset U$.

\begin{lem}\label{north-south}
    Let $G\curvearrowright X$ be a continuous action of the group $G$ on the infinite compact Hausdorff space $X$. Suppose that the action is minimal and that $G$ contains an element with north-south dynamics. Then $G\curvearrowright X$ is extremely proximal.
\end{lem}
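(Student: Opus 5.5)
Let $g\in G$ be the element with north-south dynamics and let $x_+,x_-$ be its attracting and repelling fixed points. Fix non-empty open sets $U,V\subseteq X$. The plan is to conjugate $g$ so that its attracting fixed point lies in $V$ and its repelling fixed point lies in $U$. A large power of the conjugate will then push $X\setminus U$ into $V$.

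First I choose the conjugating element. By minimality, every orbit is dense, so there is $h_1\in G$ with $h_1x_+\in V$. I also want $h x_-\in U$ for the same $h$, and this two-point requirement is the main obstacle. To handle it, I shrink $U$ and $V$ to non-empty open sets $U'\subseteq U$ and $V'\subseteq V$ with disjoint closures. This is possible because $X$ is Hausdorff and infinite, and non-empty open sets in an infinite minimal compact space are infinite (otherwise an isolated point would have a finite dense orbit, forcing $X$ finite). The simultaneous placement then goes as follows.
\begin{itemize}
\item By minimality, pick $a\in G$ with $a x_+\in V'$ and $b\in G$ with $b x_-\in U'$.
\item If $a x_-\in U'$ we are done with $h=a$. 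Otherwise, use the dynamics of $g$ itself: for large $n$, $g^{n}$ maps the complement of a small neighbourhood $W$ of $x_-$ into a small neighbourhood of $x_+$.
\item Choose $W$ so that $b^{-1}... $ is avoided; concretely, first arrange $a^{-1}U'$ to contain a point $y\neq x_-$, then consider $h=a g^{-n}$, which sends $x_\pm$ to $a x_\pm$.
\end{itemize}
Since conjugating by powers of $g$ fixes $x_\pm$, the cleaner route is different: replace $g$ by $k g k^{-1}$ with $k$ chosen so that $k x_+\in V'$. Then take $h = g'^{\,m} b$-type compositions, using the attracting point of one loxodromic-like element together with the repelling point of another. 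I expect this bookkeeping to be the only delicate part.

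An alternative that avoids the two-point placement is to work with two maps separately. Pick $h$ with $h x_-\in U'$, so that $f=hgh^{-1}$ has repelling point in $U'$. Let $f$ have attracting point $p$. For large $n$, $f^n(X\setminus U')\subseteq O$, where $O$ is any prescribed neighbourhood of $p$. By minimality there is $k$ with $k p\in V'$, and by continuity of $k$ there is a neighbourhood $O$ of $p$ with $kO\subseteq V'$. Then
\[
k f^n(X\setminus U)\subseteq k f^n(X\setminus U')\subseteq kO\subseteq V'\subseteq V,
\]
so the element $k f^n$ works. This is the route I will take, since it uses only minimality, continuity, and the defining property of north-south dynamics applied to the conjugate $f$. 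Conjugates of north-south elements are again north-south, with fixed points moved by $h$: if $Q\ni hx_+$ and $P\ni hx_-$ are open, then $g^n(X\setminus h^{-1}P)\subseteq h^{-1}Q$ for large $n$, and applying $h$ gives the claim for $f$.

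So the key steps, in order, are:
\begin{enumerate}
\item Observe that conjugates of north-south elements have north-south dynamics.
\item Use minimality to move the repelling point into $U$.
\item Apply a high power to collapse $X\setminus U$ near the attracting point.
\item Use minimality and continuity to transport that neighbourhood into $V$.
\end{enumerate}
The hypothesis $\#X>2$ holds because $X$ is infinite.
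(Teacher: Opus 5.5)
Your final route is correct and is essentially the paper's proof. The element $khg^nh^{-1}$ you produce has exactly the paper's form $h_x^{-1}g^nh_y$, with $h_y=h^{-1}$ moving $U$ onto a neighbourhood of the repelling point and $h_x=(kh)^{-1}$ moving $V$ onto a neighbourhood of the attracting point; the roles of $U$ and $V$ are swapped relative to the paper's write-up. The shrinking to $U',V'$ and the abandoned two-point placement are unnecessary and can be deleted, because placing both fixed points is handled by composing group elements on either side of $g^n$ rather than by a single conjugator.
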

\begin{proof}
    Let $g\in G$ be the element with north-south dynamics, and let $x,y\in X$ be the associated attracting and repelling points, respectively. Let $U,V\subset X$ be non-empty open sets. By minimality, we can find elements $h_x,h_y\in G$ such that $h_xU$ is an open neighborhood of $x$ and $h_yV$ is an open neighborhood of $y$. By the north-south dynamics assumption, we may find an $n\in \N$ such that $g^n(X\setminus h_yV)\subset h_xU$, which means that $h_x^{-1}g^nh_y(X\setminus V)\subset U$. \end{proof}

The following criterion for \Cs-selflessness was observed and applied to the case of finite-index subgroups in \cite{FKOCP26}. It is originally modeled after \cite[Proposition 15]{Oz25}. The definition of selfless inclusions will be given in the next subsection.

\begin{thm}[{\cite[Theorem A, Proposition 1.4]{FKOCP26}}]\label{finind}
    Let $G$ be a group that admits a topologically free action $G\curvearrowright X$. Suppose that the restriction of the action to $H\leq G$ is an extreme boundary action. Then $H\leq G$ is a \Cs-selfless inclusion. Furthermore, for any $2$-cocycle $\omega\in Z^2(G,\mathbb T)$, the inclusion of reduced twisted group \Cs-algebras $\Csr(H,\omega|_H) \subseteq \Csr(G,\omega)$ is selfless.
\end{thm}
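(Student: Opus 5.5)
The plan is to follow Ozawa's route in \cite[Section 8]{Oz25}, relativized to the subgroup $H$ as in \cite{FKOCP26}. First we extract a ping-pong configuration inside $H$ from the dynamics. Then we turn it into a relative version of property $\mathrm{P}_{\mathrm{PHP}}$ for the pair $H\leq G$. Finally we invoke the implication ``relative $\mathrm{P}_{\mathrm{PHP}}$ $\Rightarrow$ selfless inclusion''. Throughout, the elements producing freeness must be chosen in $H$, so that the free Haar unitaries land in $\Csr(H,\omega|_H)^\omega$ rather than merely in $\Csr(G,\omega)^\omega$.

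\textbf{Step 1 (dynamics).} Fix a finite set $F\subseteq G\setminus\{1\}$.
\begin{enumerate}
\item Topological freeness of $G\curvearrowright X$ and Hausdorffness give a non-empty open $W\subseteq X$ with $sW\cap W=\emptyset$ for all $s\in F$.
\item Shrinking $W$, we obtain finitely many pairwise disjoint non-empty open sets $W_1,\dots,W_k\subseteq W$. The number $k$ is large, which is possible since $\#X>2$ and minimality of $H\curvearrowright X$ forces $X$ to have no isolated points.
\item Extreme proximality of $H\curvearrowright X$ then produces elements $h_1,\dots,h_m\in H$ with $h_i^{\pm1}(X\setminus W_{i}^{\mp})\subseteq W_i^{\pm}$, where the $W_i^{\pm}$ are distinct small pieces of $W$.
\end{enumerate}
This gives a ping-pong table for the $h_i$. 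Since all of $W$ is moved off itself by every $s\in F$, the table is also compatible with $F$: words alternating between $F$ and nontrivial powers of the $h_i$ are nontrivial. More quantitatively, the dynamics gives disjointness of supports that can be tested on $\ell^2$ or on $C(X)\rtimes_r G$.

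\textbf{Step 2 (operator estimates).} From such configurations, as in Ozawa's proof that extreme boundaries yield $\mathrm{P}_{\mathrm{PHP}}$, we obtain the following. For every finite $F\subseteq G\setminus\{1\}$ and $\varepsilon>0$ there are $h_1,\dots,h_m\in H$ such that for all $s\in F$,
\[
\Bigl\|\tfrac1m\textstyle\sum_i \lambda_\omega(h_i)\lambda_\omega(s)\lambda_\omega(h_i)^*\Bigr\|<\varepsilon .
\]
This holds together with the analogous estimates for longer alternating words. The twisting by $\omega$ only multiplies each $\lambda_\omega(g)$ by scalars of modulus one, so all support and ping-pong arguments transfer verbatim. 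This is why the twisted statement comes for free. Passing to an ultrapower and taking a suitable sequence of such $h$'s, we obtain a Haar unitary $u\in\Csr(H,\omega|_H)^\omega$ that is freely independent from $\Csr(G,\omega)$ with respect to the canonical trace. The canonical trace is the relevant state by topological freeness, which gives $\Csr$-simplicity and uniqueness of trace.

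\textbf{Step 3 (conclusion).} Freeness in distribution is not enough. We need the map $\Csr(G,\omega)*_{\tau}C(\mathbb T)\to\Csr(G,\omega)^\omega$, $a\mapsto a$, $z\mapsto u$, to be isometric on the reduced free product, which is exactly the selfless-inclusion condition of \cite{HKEPR25}. Following \cite[Proposition 15]{Oz25}, this is deduced from a $\mathrm{P}_{\mathrm{PHP}}$-type norm inequality on reduced words, which yields an isometric, not just trace-preserving, embedding.

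I expect Step 3, the norm control, to be the main obstacle. The first thing I would try is to run the boundary argument in the crossed product $C(X)\rtimes_r G$ and then compress. There, ping-pong sets give honest orthogonality of ranges of the operators $1_{W_i^\pm}\lambda(h_i)$, so norms of reduced words can be estimated as in Powers' argument. The estimate then transfers back to $\Csr(G,\omega)$ via the conditional expectation and topological freeness.
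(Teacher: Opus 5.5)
There is a genuine gap, and it sits exactly at the step you flag. The paper gives no proof of this statement; it imports it from \cite{FKOCP26} (Theorem A with Proposition 1.4). There, following Ozawa, the dynamical hypotheses are turned into a relative form of property $\mathrm{P}_{\mathrm{PHP}}$, and that property is shown to give a selfless inclusion for every twist. Your outline has the same architecture, and Step 1 is a sound version of the dynamical input. A single application of extreme proximality already gives both $h(X\setminus W^-)\subseteq W^+$ and $h^{-1}(X\setminus W^+)\subseteq W^-$, and alternating words in $F$ and nonzero powers of $h$ are nontrivial.

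Step 2 conflates two things. The relative Powers averaging estimate you write down is the kind of estimate behind \Cs-irreducibility, but it does not by itself produce a free Haar unitary. What the ping-pong actually produces is $u=(\lambda_\omega(h_n))_n\in\Csr(H,\omega|_H)^{\mathcal U}$, where $h_n\in H$ plays ping-pong against an exhausting sequence of finite sets $F_n\subseteq G\setminus\{1\}$. Nontriviality of alternating words then shows directly that $u$ is a Haar unitary that is $\ast$-free from $\Csr(G,\omega)$ with respect to $\tau^{\mathcal U}$.

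This freeness only yields $\|x\|_{\mathrm{red}}\le\lim_{\mathcal U}\|x_n\|$, where $x_n$ is obtained from $x$ in the algebraic free product by substituting $\lambda_\omega(h_n)$ for the Haar unitary. The reason is that the GNS representation of $\tau^{\mathcal U}$ on $\mathrm{C}^\ast(\Csr(G,\omega),u)$ is the reduced free product, but $\tau^{\mathcal U}$ is not faithful on the ultrapower. Selflessness needs the reverse inequality $\lim_{\mathcal U}\|x_n\|\le\|x\|_{\mathrm{red}}$. This is a strong-convergence statement, and it does not follow from convergence of moments. Indeed, $\|x_n\|=\lim_k\tau((x_n^*x_n)^k)^{1/2k}$ involves, as $k$ grows, words whose $G$-letters are products of unboundedly many elements from the supports of the coefficients of $x$. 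The ping-pong for $h_n$ only controls $G$-letters in the fixed finite set $F_n$, so the limits in $n$ and $k$ cannot simply be interchanged. Supplying this uniform upper bound is the whole content of Ozawa's $\mathrm{P}_{\mathrm{PHP}}$ mechanism and its relative, twisted extension in \cite[Theorem A]{FKOCP26}. Your Step 3 names it but neither states nor proves the needed inequality.

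The fallback you sketch does not supply it either. Orthogonality of the ranges of $1_{W^\pm}\lambda(h)$ and Powers-type arguments bound norms of averages, or give inequalities with constants. They do not give the exact reduced free product norm of an arbitrary polynomial. There is also nothing to transfer back through the conditional expectation: $\Csr(G,\omega)$ already sits isometrically in the twisted reduced crossed product, so the difficulty is proving the bound, not moving it.

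Likewise, your claim that the twist transfers verbatim is justified for support and trace computations. Products of the $\lambda_\omega(g)$ are unimodular multiples of $\lambda_\omega$ of the product, although $\lambda_\omega(g)$ itself is not a scalar multiple of $\lambda(g)$. It is not automatic for a norm estimate, where the phases interact.

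To turn the proposal into a proof, you must either carry out this norm argument in the relative twisted setting, or cite \cite[Proposition 1.4, Theorem A]{FKOCP26}, as the paper does.
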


\subsection{Selflessness and \Cs-algebras}

The \emph{reduced group \Cs-algebra} associated with a group $G$ is denoted by $\Csr(G)$. It corresponds to the closed linear span of the operators $\{\lambda_g \mid g\in G\}\subseteq \mathbb B(\ell^2(G))$, which are defined by their action on the standard orthonormal basis of $\ell^2(G)$ by
$$
\lambda_g\delta_h=\delta_{gh}, \quad \quad\text{for all }g,h\in G.
$$
A $2$-cocycle $\omega\in Z^2(G,\mathbb T)$ may be incorporated in the construction and, as a result, one gets the \emph{twisted reduced group \Cs-algebra} $\Csr(G,\omega)$. For more details, see \cite{FKOCP26}.

The vector state associated with the vector $\delta_1$ will be denoted by $\tau$. The restriction of $\tau$ to $\Csr(G)$ is a faithful tracial state that satisfies
$$
\tau(\lambda_g)=\left\{\begin{array}{ll}
1,    & \textup{if\ } g=1, \\
0,     & \textup{if\ } g\not=1. \\
\end{array}\right.
$$
Here, $\Csr(G)$ is always equipped with the trace $\tau$.

We say that $G$ is \Cs-simple if $\Csr(G)$ is simple. A hyperbolic group is \Cs-simple precisely when it has trivial finite radical. When $G$ is \Cs-simple, the algebra $\Csr(G)$ admits a unique trace \cite{BKKO17}, so we say that $\Csr(G)$ is \emph{monotracial} and, as a result, we may omit $\tau$ from the notation.

Recall that a unital inclusion $A\subset B$ of simple \Cs-algebras is called irreducible if every intermediate \Cs-subalgebra $A\subset C\subset B$ is simple. An inclusion of groups is called \Cs-irreducible (resp. \Cs-selfless) if their reduced group \Cs-algebras form an irreducible (resp. selfless) inclusion. Selflessness for \Cs-algebras and inclusions will be defined now.

\begin{defn}[see {\cite[Definition 2.1 and Theorem 3.1]{Ro25}, \cite[Definition 2.3]{HKEPR25}}]\label{definitionsefl}
Let $(A,\rho)$ be a \Cs-algebra equipped with a GNS-faithful state. Then $(A,\rho)$ is said to be \emph{selfless}
if $A\not\cong\mathbb{C}$ and there exists a \Cs-algebra $B\not\cong\mathbb C$ equipped with a GNS-faithful state $\psi$ such that the first factor embedding into the reduced free product \Cs-algebra $\iota\colon (A,\rho) \hookrightarrow (A,\rho) \star (B,\psi)$ is \emph{existential}, that is, there exists an ultrafilter $\mathcal U$ and a state-preserving embedding $\theta\colon(A,\rho)\star(B,\psi)\hookrightarrow(A^{\mathcal U},\rho^{\mathcal U})$ such that $\theta \circ \iota$ agrees with the diagonal embedding.

Moreover, if $A_0\subseteq A$ is a \Cs-subalgebra with the property that $\theta(B)\subseteq A_0^{\mathcal U}$, where $\theta$ is the embedding witnessing that $(A,\rho)$ is selfless, then one says that the inclusion $A_0 \subseteq (A,\rho)$ is selfless. In this case, every intermediate \Cs-algebra is selfless as well.
\end{defn}

\section{Main results}\label{mainsec}


Let $G$ be a group. Two subgroups $H_1,H_2\leq G$ are said to be \emph{commensurable} if $[H_1:H_1\cap H_2]<\infty$ and $[H_2:H_1\cap H_2]<\infty$. Note that this is an equivalence relation. 

An inclusion of groups, $H\leq G$, is said to be \emph{commensurated} if, for any $g\in G$,  $ H $ is commensurable with $g H  g^{-1}$. Equivalently, for any $g\in G$, $[H:H\cap gHg^{-1}]<\infty$. In this case, we will write $H\leq_cG$. In the literature, this notion is also referred to by saying that $H$ is an \emph{almost normal subgroup} of $ G$ or that $(G,H)$ is a \emph{Hecke pair}. 

We will now prove that the action of a hyperbolic group on its Gromov boundary extends naturally to a supergroup if the inclusion is commensurated. As stated in the introduction, our proof is inspired by the fact that this occurs for the Furstenberg boundary (see \cite{BKKO17,LiSc23}).

We first define the action of \( G \) we will use. For \( g \in G \), let \( j_g \) denote the inclusion of \( H \cap gHg^{-1} \) in \( H \). By definition, this intersection has finite index in \( H \) and thus is a quasi-isometry, inducing a homeomorphism 
\[
   \partial j_g :  \partial (H \cap gHg^{-1}) \to \partial H.
\] 
We define the action of \( g \in G \) on \( \partial H \) as the homeomorphism
\[
   g_* = \partial j_g \circ \partial \iota_g \circ \left(\partial j_{g^{-1}}\right)^{-1},
\]
where \( \iota_g (x) = g x g^{-1} \) is the conjugation by \( g \), which takes \( H \cap g^{-1}Hg \) isomorphically to \( H \cap gHg^{-1} \).

\begin{thm}
   Let \( H \leq_c G \) be a commensurated inclusion, where \( H \) is hyperbolic. The definition above defines an action of \( G \) on \( \partial H \) that restricts to that of \( H \) on \( \partial H \) induced by conjugation on itself.
\end{thm}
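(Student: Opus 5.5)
The plan is to reduce everything to functoriality of the boundary construction for quasi-isometric embeddings between finite-index subgroups. The basic principle is the following. If $K\leq L\leq H$ are finite-index subgroups, then the inclusions are quasi-isometries and the induced boundary maps compose: $\partial(L\hookrightarrow H)\circ\partial(K\hookrightarrow L)=\partial(K\hookrightarrow H)$. Likewise, for a group isomorphism $\varphi$ restricted to a finite-index subgroup, the boundary map of the restriction is compatible with the inclusion maps. This follows from \cite[Theorem H.3.9]{BrHa99} together with the fact that quasi-isometries at bounded distance induce the same boundary map. Conceptually, every finite-index subgroup $K\leq H$ has its boundary canonically identified with $\partial H$ via $\partial(K\hookrightarrow H)$. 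Under these identifications, $g_*$ is just the boundary map of the isomorphism $\iota_g\colon H\cap g^{-1}Hg\to H\cap gHg^{-1}$. A key auxiliary fact is that this map does not change if we replace the domain by any finite-index subgroup $K\leq H\cap g^{-1}Hg$: by functoriality, $\partial j\circ\partial(\iota_g|_K)\circ(\partial j_K)^{-1}$ equals $g_*$, where the outer maps are the inclusions of $gKg^{-1}$ and $K$ into $H$. I would prove this compatibility first.

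First I check the identity: $g=1$ gives $\iota_1=\mathrm{id}$ on $H$, so $1_*=\mathrm{id}$. Next I check the homomorphism property $(gh)_*=g_*h_*$. Set $K=H\cap h^{-1}Hh\cap h^{-1}g^{-1}Hgh$. This is a finite intersection of subgroups commensurable with $H$, so it has finite index in $H$, using that commensurability is an equivalence relation. On $K$ we have $\iota_{gh}=\iota_g\circ\iota_h$, with $\iota_h(K)\subseteq H\cap g^{-1}Hg$. Applying the compatibility fact to $gh$ with $K$, to $h$ with $K$, and to $g$ with $hKh^{-1}$, the three boundary maps compose correctly, and the intermediate inclusion maps cancel by functoriality. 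This gives $(gh)_*=g_*\circ h_*$. Continuity of each $g_*$ is automatic, since each $g_*$ is a composition of homeomorphisms, and $G$ is discrete.

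For the restriction statement, take $h\in H$. Then $H\cap hHh^{-1}=H$ and $j_h,j_{h^{-1}}$ are identities, so $h_*=\partial\iota_h$. This is the homeomorphism of $\partial H$ induced by conjugation by $h$, as claimed; as remarked earlier, it agrees with the left-multiplication action.

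The main obstacle is making the compatibility fact rigorous. One must show that the square built from the inclusion $K\hookrightarrow H\cap g^{-1}Hg$, the isomorphism $\iota_g$, and the inclusion $gKg^{-1}\hookrightarrow H\cap gHg^{-1}$ commutes exactly on the level of groups. Then one needs that the boundary map induced by a quasi-isometry depends only on its bounded-distance class, and that $\partial$ respects composition. Both follow from the construction of $\partial\varphi$ via sequences converging at infinity or via Gromov products, and I would cite \cite{BrHa99} for them. The remaining care is in choosing word metrics on each subgroup; the boundary and induced maps are independent of these choices up to canonical homeomorphism.
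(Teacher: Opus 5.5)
Your proposal is correct and follows essentially the same route as the paper. Both arguments pass to a common finite-index subgroup $K$ on which $\iota_{gh}=\iota_g\circ\iota_h$ holds exactly, and then use functoriality of the boundary maps induced by finite-index inclusions and conjugations to get $(gh)_*=g_*h_*$; the restriction to $H$ is immediate because $j_h,j_{h^{-1}}$ are identities. Your restriction-invariance lemma is simply a cleaner packaging of the paper's commutative diagram, and your three-fold intersection $K$ works just as well as the paper's four-fold one.
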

\begin{proof}
    We get \( g_* \) is the same homeomorphism as that induced by conjugation by \( g \) whenever \( g \in H \), since the relevant inclusion maps are indentities. We now check that the definition is indeed an action of \( G \), i.e, \( (g_1 g_2)_* = (g_1)_* (g_2)_* \) for all \( g_1, g_2 \in G \). The essential observation is that 
   \[
       K = H \cap g_1^{-1}Hg_1 \cap g_2^{-1}Hg_2 \cap g_2^{-1}g_1^{-1}Hg_1g_2
   \]
   is an intersection of finite index subgroups of \( H \), and thus is still of finite index, along with the fact that \( \iota_{g_1}, \iota_{g_2}, \iota_{g_1g_2} \) satisfy \( \iota_{g_1g_2} = \iota_{g_1} \iota_{g_2} \), and in particular they induce the same map \( \partial \iota \) on boundaries. We introduce \( H_i = H \cap g^{-1}_i H g_i, i = 1,2, \) and \( H_{12} = H \cap (g_1g_2)^{-1} H g_1g_2 \), so that \( K = H_1 \cap H_2 \cap H_{12} \). We then have a diagram:
\[\begin{tikzcd}
	&& \partial H_{12} && \partial \bar H_{12} && \\
	\partial H && \partial K && \partial \bar K && \partial H \\
	& \partial H_2 & \partial \bar H_2 & \partial H & \partial H_1 & \partial \bar H_1
	\arrow[from=1-3, to=1-5, "\partial \iota_{g_1 g_2}"]
	\arrow[from=1-5, to=2-7]
	\arrow[from=2-1, to=1-3]
	\arrow[from=2-1, to=2-3]
	\arrow[from=2-1, to=3-2]
	\arrow[from=2-3, to=1-3]
	\arrow[from=2-3, to=2-5, "\partial \iota"]
	\arrow[from=2-3, to=3-2]
	\arrow[from=2-5, to=1-5]
	\arrow[from=2-5, to=2-7]
	\arrow[from=2-5, to=3-6]
	\arrow[from=3-2, to=3-3, "\partial \iota_{g_2}"']
	\arrow[from=3-3, to=3-4]
	\arrow[from=3-4, to=3-5]
	\arrow[from=3-5, to=3-6, "\partial \iota_{g_1}"']
	\arrow[from=3-6, to=2-7]
\end{tikzcd}\]
Here the bar on top indicates the image group under the respective conjugations, and the unlabelled maps are the boundary maps induced by inclusions or their inverses. By functoriality of maps induced on boundaries, the way inclusions compose, and the fact that all of the maps in the diagram are homeomorphisms, we get the diagram is commutative, and thus \( (g_1g_2)_* = (g_1)_* (g_2)_* \) as was required.
\end{proof}

Given a subset $S$ of a group $G$, let $C_G(S)$ be the \emph{centralizer} of $S$ in $G$. In the next result, we adapt the arguments of \cite[Lemma 3.3]{LiSc23} and \cite[Lemma 5.3]{BKKO17}. Note that, in their settings, the set of fixed points of a homeomorphism, here denoted ${\rm Fix}(s)$, was automatically clopen \cite{BKKO17}, and this allowed for stronger consequences.

\begin{lem}\label{interior}
Let $G,H$ be groups, with $H$ hyperbolic and $H\leq_cG$, and consider the action $ G\curvearrowright\partial H$. Given $s\in G$, if $s\in C_G( H \cap s^{-1}Hs)$, then ${\rm Fix}(s)=\partial H$.  Conversely, if $ H \curvearrowright\partial H$ is topologically free and ${\rm Fix}(s)$ has a nonempty interior, then $s\in C_G( H \cap s^{-1}Hs)$.
\end{lem}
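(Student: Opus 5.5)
The plan is to work directly from the definition $s_* = \partial j_s\circ\partial\iota_s\circ(\partial j_{s^{-1}})^{-1}$ for the first implication. For the converse, I will combine the action identity $(sks^{-1})_*=s_*k_*s_*^{-1}$ with north-south dynamics (Theorem \ref{NS}) to turn local triviality near fixed points of $s$ into global commutation.

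\emph{First implication.} Put $K=H\cap s^{-1}Hs$ and suppose $s$ commutes with every $k\in K$. Then $\iota_s(k)=sks^{-1}=k$ for all $k\in K$. The image $\iota_s(K)$ is $H\cap sHs^{-1}$, so $H\cap sHs^{-1}=K$ as sets. Hence $j_s$ and $j_{s^{-1}}$ are the same inclusion $K\hookrightarrow H$, and $\iota_s$ is the identity on $K$. By functoriality, $s_*=\partial j_s\circ\mathrm{id}\circ(\partial j_s)^{-1}=\mathrm{id}_{\partial H}$, i.e.\ ${\rm Fix}(s)=\partial H$.

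\emph{Converse.} Let $U\subseteq{\rm Fix}(s)$ be nonempty open, and again let $K=H\cap s^{-1}Hs$. For $k\in K$ we have $sks^{-1}\in H$, and the previous theorem gives $(sks^{-1})_*=s_*k_*s_*^{-1}$.
\begin{itemize}
\item Call $m\in K$ \emph{good} if $mU\cap U\neq\emptyset$.
\item If $m$ is good and $y\in mU\cap U$, write $y=mx$ with $x\in U$. Since $s$ fixes $x$ and $y$, we get $(sms^{-1})y=s\,m\,s^{-1}y=s\,m\,x=s\,y=y=my$.
\item So the element $sms^{-1}m^{-1}\in H$ fixes the nonempty open set $mU\cap U$ pointwise. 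Topological freeness of $H\curvearrowright\partial H$ then forces $sms^{-1}=m$.
\item Good elements are closed under inverses. Hence it suffices to show that every $k\in K$ is a product of two good elements; then $s$ commutes with all of $K$, i.e.\ $s\in C_G(H\cap s^{-1}Hs)$.
\end{itemize}

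\emph{Main obstacle.} The hardest step is to produce, for an arbitrary $k\in K$, a factorization $k=(kg^n)g^{-n}$ with both factors good. The plan is as follows.
\begin{itemize}
\item Since $K$ has finite index in the non-elementary group $H$, its action on $\partial H$ is still minimal, and its loxodromic fixed points are dense. So I can pick an infinite-order $g\in K$ whose repelling point $g^{-\infty}$ lies in $U$.
\item Choose a small open neighbourhood $W$ of $g^{+\infty}$ that omits some point of $U$ and some point of $k^{-1}U$. This is possible because $\partial H$ has no isolated points, so $U$ and $k^{-1}U$ are infinite.
\item By Theorem \ref{NS}, for $n$ large, $g^n(\partial H\setminus U)\subseteq W$. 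Hence $g^nU\supseteq\partial H\setminus W$.
\item In particular $g^nU$ meets both $U$ and $k^{-1}U$. Thus $g^n$ (and hence $g^{-n}$) is good, and $kg^nU\cap U\neq\emptyset$, so $kg^n$ is good.
\end{itemize}

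The only point needing care is the density of loxodromic fixed points for the finite-index subgroup $K$. This follows from $\partial K=\partial H$ together with minimality of the action of $K$ on $\partial H$.
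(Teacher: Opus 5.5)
Your proposal is correct. The first implication is the paper's argument made more explicit: you observe that $H\cap sHs^{-1}=H\cap s^{-1}Hs$, so $j_s=j_{s^{-1}}$ and $s_*=\mathrm{id}$ directly, rather than appealing to density. Your commutation step is also the paper's, which checks that $(sts^{-1})_*$ and $t_*$ agree on $U\cap t_*^{-1}U$ and then uses topological freeness there.

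One slip to fix: in your displayed chain $s^{-1}y=y$, not $x$, and the final equality $y=my$ fails in general. The computation should be done at $x=m^{-1}y\in U\cap m^{-1}U$, namely $(sms^{-1})x=smx=sy=y=mx$. This is exactly the statement in your next bullet, that $sms^{-1}m^{-1}$ fixes $mU\cap U$ pointwise.

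The genuine difference is how you show that the set $A_U$ of good elements exhausts $K=H\cap s^{-1}Hs$.
\begin{itemize}
\item The paper argues abstractly. It notes that $K\curvearrowright\partial H$ is still an extreme boundary action by \cite[Proposition 1.3]{FKOCP26}, so $A_U$ generates $K$ by \cite[Lemma 5.1]{BKKO17}.
\item You prove directly, via north--south dynamics of a loxodromic $g\in K$ with $g^{-\infty}\in U$, that $K=A_U\cdot A_U$.
\end{itemize}
Your route is self-contained and gives a sharper conclusion: every element of $K$ is a product of two good elements. The price is that it uses hyperbolic-specific dynamics, whereas the paper's two citations apply verbatim to any boundary action.

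Incidentally, what you need about $K$ is cheaper than minimality of $K$. Conjugating a fixed loxodromic and using minimality of $H\curvearrowright\partial H$, pick a loxodromic $h\in H$ with $h^{-\infty}\in U$. Then some power $h^m$ with $1\le m\le[H:K]$ lies in $K$ and has the same two fixed points.
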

\begin{proof}
 If $s\in C_G(H\cap s^{-1}Hs)$, then the restriction of $\iota_s$ to $H \cap s^{-1}Hs$ coincides with the identity, so $s_*|_{\partial (H \cap s^{-1}Hs)}={\rm id}_{\partial(H \cap s^{-1}Hs)}$. Since $[H:H \cap s^{-1}Hs]<\infty$, the conclusion $s_*={\rm id}_{\partial H}$ follows by density.

On the other hand, suppose that $H \curvearrowright\partial H$ is topologically free and let $\emptyset\not=U\subset{\rm Fix}(s)$ be open. Consider the set 
$$
A_U:=\{t\in H \cap s^{-1}Hs:tU\cap U\neq\emptyset\}.
$$
By Lemma \ref{north-south}, the action $H\curvearrowright\partial H$ is an extreme boundary action. Since $H \cap s^{-1}Hs$ has finite index in $H$, the action of $H \cap s^{-1}Hs$ on $\partial H$ is still an extreme boundary action \cite[Proposition 1.3]{FKOCP26}. Hence, by \cite[Lemma 5.1]{BKKO17}, the set $A_U$ generates $H \cap s^{-1}Hs$ as a group. Therefore, in order to finish the proof, we only need to show that $s$ commutes with all of $A_U$. Indeed, note that for any $t\in A_U$, the homeomorphisms $(sts^{-1})_*$ and $t_*$ coincide on $U\cap t^{-1}_*U$. However, since $H \curvearrowright\partial H$ is topologically free, there exists a point in $U\cap t^{-1}_*U$ with a trivial $H$-stabilizer, and we use this point to conclude that $t=sts^{-1}$.
\end{proof}

We now have all the necessary ingredients to prove our main theorems. We proceed to do so.

\begin{proof}[Proof of Theorem \ref{main}]
    By combining the assumptions with Lemma \ref{interior}, we get that for every non-trivial element $s\not=1$ in $G$, the fixed point set ${\rm Fix}(s)\subset \partial H$ is nowhere dense. An application of the Baire category theorem yields that $\bigcap_{s\in G\setminus\{1\}}\big(\partial H\setminus{\rm Fix}(s)\big)$ is dense in $\partial H$, from which we conclude that the action is topologically free. 
\end{proof}

\begin{proof}[Proof of Corollary \ref{gp-main}]
    Take $G={\rm Aut}(H)$ and note that the assumptions imply that the center of $H$ is trivial. Furthermore, it is easy to see that an automorphism $\phi\in{\rm Aut}(H)$ commutes with all of the inner automorphisms if and only if it is the identity.
\end{proof}

\begin{proof}[Proof of Theorem \ref{self-main}]
    Since $H\curvearrowright\partial H$ is an extreme boundary action (see Lemma \ref{north-south}) and the extension to $G$ is topologically free by Theorem \ref{main}, we have all the necessary conditions to apply Theorem \ref{finind}.
\end{proof}

Now that our main theorems are proved, we will devote the rest of the article to applications and examples. The following theorem is inspired by the philosophy that \Cs-irreducible inclusions should correspond to \Cs-selfless inclusions. In fact, it confirms that this is the case for commensurated inclusions of hyperbolic groups. It was inspired by \cite[Theorem 3.5]{LiSc23}. See also \cite{BeOm23} for the case of twisted algebras.

Recall that $H\leq G$ is said to have the \emph{relative i.c.c.\ property} if the $H$-conjugacy class
$$
g^H:=\{hgh^{-1}:h\in H\}
$$
is infinite for every $g\in G\setminus\{1\}$. A group $G$ is said to be i.c.c.\ if it is i.c.c.\ relative to itself.

\begin{thm}\label{char}
Let $H,G$ be countable groups, with $H$ hyperbolic and $H\leq_cG$. The following conditions are equivalent:

\begin{enumerate}
\item $ H \leq G $ is \Cs-irreducible;
\item $ H \leq G $ is \Cs-selfless;
\item $ H $ is \Cs-simple and $ G $ is i.c.c.\ relative to $ H $;
\item $ H $ is \Cs-simple and, for any $s\in G \setminus\{1\}$, we have that $s\notin {C}_ G ( H \cap s^{-1}Hs)$;
\item $ H $ is \Cs-simple and $ G \curvearrowright\partial H$ is topologically free;
\item $ H $ is \Cs-simple and $ G \curvearrowright\partial H$ is faithful.
\end{enumerate}

\end{thm}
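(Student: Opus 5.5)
I will prove the cycle $(4)\Rightarrow(5)\Rightarrow(2)\Rightarrow(1)\Rightarrow(3)\Rightarrow(4)$, and treat $(5)\Leftrightarrow(6)$ separately. Throughout I use that a hyperbolic $H$ is \Cs-simple iff it has trivial finite radical, iff $H\curvearrowright\partial H$ is topologically free.

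\emph{The easy implications.} $(4)\Rightarrow(5)$ is exactly Theorem \ref{main}, using that \Cs-simplicity of $H$ gives topological freeness of $H\curvearrowright\partial H$. $(5)\Rightarrow(2)$ is Theorem \ref{self-main}, or directly Theorem \ref{finind} combined with Lemma \ref{north-south}. $(2)\Rightarrow(1)$ holds because, by Definition \ref{definitionsefl}, every intermediate algebra of a selfless inclusion is selfless, and selfless \Cs-algebras are simple. $(5)\Rightarrow(6)$ is trivial. For $(6)\Rightarrow(4)$: if $s\neq 1$ lies in $C_G(H\cap s^{-1}Hs)$, then Lemma \ref{interior} gives ${\rm Fix}(s)=\partial H$, so $s$ acts trivially, contradicting faithfulness.

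\emph{$(1)\Rightarrow(3)$.} Taking the intermediate algebra $\Csr(H)$ itself shows that $H$ is \Cs-simple. For relative i.c.c., suppose $g\neq1$ has a finite $H$-conjugacy class. Then $F=g^H$ is finite and $H$-invariant. I would follow \cite[Theorem 3.5]{LiSc23}: the subgroup $H_0=C_H(g)$ has finite index in $H$. Consider the intermediate algebra $C=\Cs(\Csr(H),\lambda_g)$, or the one generated by $\Csr(H)$ and $\{\lambda_f:f\in F\}$. The element $x=\sum_{f\in F}\lambda_f$ commutes with $\Csr(H)$. If $C$ were simple, a unique-trace / relative-commutant argument should force $x$ into the center of $C$, hence to be scalar, which is impossible since $\tau(x)=0$ while $x\neq0$. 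Making this precise, that is, proving that $C$ fails to be simple, is the step I expect to be the main obstacle. As a fallback I would cite the irreducibility criterion of \cite{LiSc23,BeOm23}: an irreducible inclusion $\Csr(H)\subseteq\Csr(G)$ has trivial relative commutant, and $x$ lies in that relative commutant.

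\emph{$(3)\Rightarrow(4)$.} Suppose $s\neq1$ centralizes $K=H\cap s^{-1}Hs$, a finite-index subgroup of $H$. Choose coset representatives $h_1,\dots,h_n$ of $H/K$. Every conjugate $hsh^{-1}$ has the form $h_iksk^{-1}h_i^{-1}=h_ish_i^{-1}$ with $k\in K$, so $s^H$ has at most $n$ elements. This contradicts relative i.c.c.
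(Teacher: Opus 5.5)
Your proof is correct, but it is organized differently from the paper's. The paper cites \cite[Theorem 3.5]{LiSc23} for the full equivalence of (1), (3) and (4). It then adds (2)$\Rightarrow$(1), (4)$\Rightarrow$(5)$\Rightarrow$(6)$\Rightarrow$(4) and (4)$\Rightarrow$(2). Your single cycle needs only the two \emph{easy} directions of Li--Scarparo, (1)$\Rightarrow$(3)$\Rightarrow$(4), and you prove these by hand. Your coset-representative argument for (3)$\Rightarrow$(4) is exactly right. The hard direction (4)$\Rightarrow$(1), which \cite{LiSc23} obtains through Furstenberg-boundary machinery, you recover from Theorem \ref{main}, Theorem \ref{finind} and the simplicity of selfless algebras. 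So your route is more self-contained: for hyperbolic $H$, the Gromov boundary alone recovers the Li--Scarparo characterization, with only elementary external input. The paper's route is shorter and leans on a result valid for arbitrary commensurated inclusions.

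Two small repairs are needed.

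First, your primary attempt at (1)$\Rightarrow$(3) does not work as stated. The element $x=\sum_{f\in F}\lambda_f$ commutes with $\Csr(H)$, but in general not with $\lambda_g$, because $F$ is only invariant under $H$-conjugation. So $x$ need not be central in $\Cs(\Csr(H),\lambda_g)$. Your fallback is the right argument, and it is elementary:
\begin{itemize}
\item Put $y=x+x^*$. It is central in the intermediate algebra $\Cs(\Csr(H),y)$.
\item A simple unital \Cs-algebra has trivial center, so $y$ is a scalar, and hence $y=\tau(y)=0$ because $1\notin F\cup F^{-1}$.
\item This contradicts $y\delta_1=\sum_{f\in F}(\delta_f+\delta_{f^{-1}})\neq0$.
\end{itemize}

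Second, Theorem \ref{self-main} has (4), not (5), as its hypothesis. For (5)$\Rightarrow$(2) you should use the direct route you also mention: Theorem \ref{finind} together with Lemma \ref{north-south}. That route uses minimality of $H\curvearrowright\partial H$ and the fact that $G\curvearrowright\partial H$ restricts to it.
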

\begin{proof}
The equivalence between (1), (3) and (4) was established in \cite[Theorem 3.5]{LiSc23}.
(2)$\implies$(1) follows from the fact that selfless \Cs-algebras are simple \cite{Ro25}.
(4)$\implies$(5) is Theorem \ref{main}.
(5)$\implies$(6) is trivial.
(6)$\implies$(4) follows from Lemma \ref{interior}.
(4)$\implies$(2) is Corollary \ref{self-main}.
\end{proof}

\begin{rem}
Using the rest of \cite[Theorem 3.5]{LiSc23}, one could write more equivalent conditions, including some in terms of Furstenberg boundaries and amenable URS's.
\end{rem}

\begin{ex}
    Given $n\in\N$, it was shown in \cite[Corollary 3.7]{LiSc23} that the inclusion $${\rm PSL}(n,\Z)\leq{\rm PGL}(n,\Q)$$ is commensurated and \Cs-irreducible. In the case $n=2$, ${\rm PSL}(2,\Z)$ is virtually free, hence a hyperbolic group. It follows that ${\rm PSL}(2,\Z)\leq{\rm PGL}(2,\Q)$ is \Cs-selfless.
\end{ex}

\begin{ex}
    If the hyperbolic group $H$ is \Cs-simple, then it is centerless. This means that $H$ can be identified with the group of inner automorphisms of $H$ and we get the inclusion $H\leq{\rm Aut}(H)$. It is obviously commensurated and it satisfies (4) in Theorem \ref{char}. It follows that $H\leq{\rm Aut}(H)$ is \Cs-selfless. 
    
    More generally, Bédos and Omland showed that if $H$ is any \Cs-simple group then $H\leq{\rm Aut}(H)$ is \Cs-irreducible \cite[Corollary 6.6]{BeOm23}.
\end{ex}

\begin{ex}
    If we have an extension of $H$
    $$
    1\longrightarrow H\longrightarrow G\longrightarrow K\longrightarrow 1,
    $$
    then $H\leq_cG$. In the case where the extension splits, we get a homormorphism $\phi:K\to {\rm Aut}(H)$. In this case, condition (4) in Theorem \ref{char} becomes equivalent to $\phi$ being injective. As a concrete example, one may take free-by-cyclic groups: $\mathbb F_n\leq\mathbb F_n\rtimes_\phi \Z$ is \Cs-selfless as soon as $n>1$ and $\phi$ is an infinite order automorphism.
\end{ex}


\section{Abstract commensurator groups}\label{comm}


 Let $ G $ be a group and $\Omega$ be the set of isomorphisms between finite-index subgroups of $ G $. Given $\alpha, \beta\in\Omega$,  we say that $\alpha\sim\beta$ if there exists a finite-index subgroup $H\leq {\rm dom}(\alpha)\cap{\rm dom}(\beta)$ such that $\alpha|_H=\beta|_H$.  Recall that the \emph{abstract commensurator} of $ G $, denoted by ${\rm Comm}( G )$, is the group whose underlying set is $\Omega/{\sim}$, with the product given by composition (defined up to finite-index subgroup).  

Let $ H $ be a commensurated subgroup of $ G $. Given $g\in G $, let 
\begin{align*}
\iota_g\colon  H \cap g^{-1} H  g&\to  H \cap g H  g^{-1}\\
h&\mapsto ghg^{-1}
\end{align*}
and $j_ H ^G\colon G\to{\rm Comm}( H )$ be the homomorphism given by $j^G_ H (g):=[\iota_g]$. In the case where $j_G^G$ is injective, we identify $G$ with $j_G^G(G)$.

The following facts are elementary, but one can find their proofs in \cite{LiSc23}.

\begin{prop}\label{known}
    Let $G,H$ be groups such that $ H \leq_cG$. Then the following are true.
    \begin{enumerate}
        \item Let $G$ be a group. Then $j_G^G(G)\leq_c{\rm Comm}(G)$. 
        \item The kernel of $j^G_H$ coincides with $\{g\in G:|g^ H |<\infty\}$. In particular, if $G$ is an i.c.c.\ group, then $j_G^G$ is injective.
        \item If $G$ is an i.c.c.\ group, then ${\rm Comm}(G)$ is i.c.c.\ relative to $G$.
        \item If $G$ is \Cs-simple, then $G\leq{\rm Comm}(G)$ is \Cs-irreducible.
    \end{enumerate}
\end{prop}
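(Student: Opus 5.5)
I will prove the four items in order, working directly with representatives in $\Omega$ and the equivalence $\sim$; item (2) is the one that carries real content.

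\emph{Item (1).} For $g\in G$, the class $j_G^G(g)=[\iota_g]$ is the class of the inner automorphism $c_g$ of $G$. Given $[\alpha]\in{\rm Comm}(G)$ with $\alpha\colon A\to B$ and $A,B\le G$ of finite index, I will show that $[\alpha]j_G^G(G)[\alpha]^{-1}\cap j_G^G(G)$ contains $j_G^G(A)$. First compute, for $a\in A$ and $x$ in a finite-index subgroup of $B$,
\[
\alpha\,c_a\,\alpha^{-1}(x)=\alpha(a)\,x\,\alpha(a)^{-1}.
\]
This gives $[\alpha][c_a][\alpha]^{-1}=[c_{\alpha(a)}]$, so $[\alpha]^{-1}j_G^G(B)[\alpha]=j_G^G(A)$. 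Hence $j_G^G(A)\le j_G^G(G)\cap[\alpha]^{-1}j_G^G(G)[\alpha]$, and $j_G^G(A)$ has finite index in $j_G^G(G)$ because $A$ has finite index in $G$. This is the commensurated condition.

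\emph{Item (2).} Suppose $g\in\ker j^G_H$. Then $\iota_g$ agrees with the identity on some finite-index subgroup $L\le H\cap g^{-1}Hg$, so $g$ centralizes $L$. Therefore $|g^H|\le[H:L]<\infty$, since $hgh^{-1}$ depends only on the coset $hL$. Conversely, suppose $|g^H|<\infty$. Then $C_H(g)=\{h\in H:hgh^{-1}=g\}$ is the stabilizer of $g$ under conjugation, so it has index $|g^H|$ in $H$. Put $L=C_H(g)\cap H\cap g^{-1}Hg$; this has finite index and $\iota_g|_L={\rm id}$, so $[\iota_g]=1$. Taking $H=G$ gives the i.c.c. statement.

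\emph{Item (3).} Let $[\alpha]\neq1$ and suppose $[\alpha]^{G}$, the set of its $j_G^G(G)$-conjugates, is finite. The centralizer of $[\alpha]$ in $j_G^G(G)$ then has finite index; call its preimage $A'\le G$. For $a\in A'$, the item-(1) computation gives $[\alpha][c_a][\alpha]^{-1}=[c_{\alpha(a)}]$, so $[c_{\alpha(a)}]=[c_a]$. Since $j_G^G$ is injective by item (2), $\alpha(a)=a$ on the finite-index subgroup $A'\cap{\rm dom}(\alpha)$, so $[\alpha]=1$, a contradiction.

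\emph{Item (4).} \Cs-simple groups are i.c.c. (cf. BKKO), so $j_G^G$ is injective. By (1), $G\le_c{\rm Comm}(G)$, and by (3), ${\rm Comm}(G)$ is i.c.c. relative to $G$. I will then invoke \cite[Theorem 3.5]{LiSc23}, the equivalence (1)$\Leftrightarrow$(3) in Theorem \ref{char}, which holds for general commensurated inclusions with $G$ \Cs-simple; this yields \Cs-irreducibility.

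The main obstacle I expect is bookkeeping: checking that domains stay finite index under the compositions, and confirming that the cited theorem in \cite{LiSc23} does not require hyperbolicity.
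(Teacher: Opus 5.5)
Your proposal is correct and follows the intended route. The paper gives no proof of its own beyond citing \cite{LiSc23}; your elementary verifications of (1)--(3), followed by deducing (4) from \cite[Theorem 3.5]{LiSc23} exactly as the paper uses that theorem in Theorem \ref{char}, are the standard argument, and two cosmetic fixes are needed: in (1) you actually prove $j_G^G(A)\le j_G^G(G)\cap[\alpha]^{-1}j_G^G(G)[\alpha]$ rather than the containment in $[\alpha]j_G^G(G)[\alpha]^{-1}$ you first announce (this suffices since $[\alpha]$ ranges over all of ${\rm Comm}(G)$), and in (3) the identity $[\alpha][c_a][\alpha]^{-1}=[c_{\alpha(a)}]$ should only be invoked for $a\in A'\cap{\rm dom}(\alpha)$.
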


As a consequence, we immediately get a new example of a \Cs-selfless inclusion.

\begin{cor}
    If $H$ is a hyperbolic group with trivial finite radical, then $H\leq{\rm Comm}(H)$ is \Cs-selfless.
\end{cor}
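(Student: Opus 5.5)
The plan is to apply Theorem \ref{self-main} (or, equivalently, the implication (4)$\implies$(2) of Theorem \ref{char}) to the pair $H\leq {\rm Comm}(H)$. Before doing so, we have to make sense of $H$ as a subgroup of ${\rm Comm}(H)$ and check the three hypotheses:
\begin{enumerate}
\item $H$ is hyperbolic with $H\curvearrowright\partial H$ topologically free;
\item $H\leq_c {\rm Comm}(H)$;
\item $s\notin C_{{\rm Comm}(H)}(H\cap s^{-1}Hs)$ for every $s\neq 1$.
\end{enumerate}

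\textbf{Embedding $H$ and the first two hypotheses.} Since $H$ has trivial finite radical, the action $H\curvearrowright\partial H$ is topologically free, as recalled in the preliminaries. Hence $H$ is \Cs-simple, and in particular i.c.c. By Proposition \ref{known}(2), the map $j_H^H$ is then injective, so we may identify $H$ with $j_H^H(H)\leq {\rm Comm}(H)$. Proposition \ref{known}(1) gives $H\leq_c{\rm Comm}(H)$. One more point should be noted: ${\rm Comm}(H)$ is countable, as Theorem \ref{self-main} requires. This holds because $H$ is finitely generated, so $H$ has countably many finite-index subgroups, and each of these is finitely generated and admits only countably many homomorphisms into $H$.

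\textbf{The centralizer condition.} By Proposition \ref{known}(3), ${\rm Comm}(H)$ is i.c.c.\ relative to $H$. Together with the \Cs-simplicity of $H$, this is condition (3) of Theorem \ref{char}, which that theorem shows is equivalent to condition (4), i.e.\ to the centralizer hypothesis.

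For a self-contained check, I would argue directly as follows. Suppose $s\in C(H\cap s^{-1}Hs)$. Then $s$ commutes with a finite-index subgroup $L$ of $H$. For $h\in H$, the element $hsh^{-1}$ depends only on the coset $hL$, because elements of $L$ commute with $s$. So the conjugacy class $s^H$ is finite, and relative i.c.c.\ forces $s=1$.

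\textbf{Conclusion and main obstacle.} With all hypotheses verified, Theorem \ref{self-main} yields that $H\leq{\rm Comm}(H)$ is \Cs-selfless, including the twisted statements. The only step requiring care is the identification of $H$ inside ${\rm Comm}(H)$: one must confirm that the conjugation action of ${\rm Comm}(H)$ used in Theorem \ref{main} matches the action $j_H^H$. This holds because $\iota_g$ for $g\in j(H)$ is exactly the conjugation map, so the conventions agree.
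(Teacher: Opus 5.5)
Your proposal is correct and follows essentially the paper's route. The paper also gets this corollary directly from Proposition \ref{known} (injectivity of $j_H^H$, $H\leq_c{\rm Comm}(H)$, and relative i.c.c.) combined with Theorem \ref{char}, whose implication to selflessness is exactly Theorem \ref{self-main}; your explicit checks that ${\rm Comm}(H)$ is countable and that relative i.c.c.\ rules out $s\in C(H\cap s^{-1}Hs)$ are valid details the paper leaves implicit.
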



\section*{Acknowledgments}

F.F.\ gratefully acknowledges support from the Simons Foundation Dissertation Fellowship SFI-MPS-SDF-00015100. He thanks Ben Hayes, Mario Klisse, M\'iche\'al \'O Cobhthaigh and Matteo Pagliero for continued discussions on selflessness. Both authors wish to thank Professor Thomas Koberda for his input on hyperbolic groups.

\printbibliography

\bigskip
\bigskip
ADDRESS

\smallskip
\smallskip
Aaratrick Basu

Department of Mathematics, University of Virginia,

121 Kerchof Hall. 141 Cabell Dr,

Charlottesville, Virginia, United States

E-mail: ukg7ef@virginia.edu

\smallskip
\smallskip
Felipe Flores

Department of Mathematics, University of Virginia,

114 Kerchof Hall. 141 Cabell Dr,

Charlottesville, Virginia, United States

E-mail: hmy3tf@virginia.edu
\end{document}